\DeclareFontFamily{U}{mathc}{}
\DeclareFontShape{U}{mathc}{m}{it}%
{<->s*[1.03] mathc10}{}
\DeclareMathAlphabet{\mathscr}{U}{mathc}{m}{it}
\theoremstyle{definition}
\newtheorem*{theorem*}{Theorem}
\newtheorem*{definition*}{Definition}
\newtheorem{theorem}{Theorem}[section]
\newtheorem{prop}[theorem]{Proposition}
\newtheorem{ex}[theorem]{Example}
\newtheorem{rmk}[theorem]{Remark}
\newcommand{\nn}{\mathbb{N}}
\newcommand{\Span}{\mathrm{Span}}
\begin{document}

\title[]{A recursion formula for Branching from $\mathfrak{sl}_n$ to $\mathfrak{sl}_2$ Subalgebras
}

\author{Korkeat Korkeathikhun}
\address{Korkeat Korkeathikhun \\ 1) Department of Mathematics, Faculty of Science, National University of Singapore, 119076, Singapore \\
2) Department of Mathematics and Computer Science, Faculty of Science, Chulalongkorn University, Bangkok 10330, Thailand}
\email{\tt korkeat.k@chula.ac.th, korkeat.k@gmail.com}

\author{Borworn Khuhirun}
\address{Borworn Khuhirun\\ Department of Mathematics and Statistics, Faculty of Science and Technology, Thammasat University, Rangsit Campus, Pathum Thani 12121, Thailand} 
\email{\tt borwornk@mathstat.sci.tu.ac.th}

\author{Songpon Sriwongsa$^*$}
\thanks{*Corresponding Author}
\address{Songpon Sriwongsa\\ Department of Mathematics, Faculty of Science, King Mongkut's University of Technology Thonburi (KMUTT), Bangkok 10140, Thailand} 
\email{\tt songpon.sri@kmutt.ac.th, songponsriwongsa@gmail.com}

\author{Keng Wiboonton}
\address{Keng Wiboonton\\ Department of Mathematics and Computer Science, Faculty of Science, Chulalongkorn University, Bangkok 10330, Thailand} 
\email{\tt keng.w@chula.ac.th, kwiboonton@gmail.com}

\keywords{Branching rules, Fundamental representations, Principal subalgebras}

\subjclass[2020]{05E10; 17B10}

\date{}

\begin{abstract}
    For any representation of a complex simple Lie algebra $\mathfrak{sl}_n$, one problem of branching rules to $\mathfrak{sl}_2$-subalgebra is to determine the multiplicity of each irreducible component. In this paper, we derive a recursion formula of such multiplicities by restricting a certain tensor representation in two ways, in which the Pieri's rule is involved. We also investigate branching rules for fundamental representations as they are initial conditions of the recursion formula.
\end{abstract}

\maketitle

\section{Introduction}

\bigskip

In this paper, the ground field is assumed to be the set of all complex numbers $\mathbb{C}$. Let $\mathfrak{g}$ be a simple Lie algebra and $V$ a finite dimensional representation of $\mathfrak{g}$. Let $\mathfrak{h}$ be a semisimple subalgebra of $\mathfrak{g}$. Then $V$ can also be regarded as a $\mathfrak{h}$-representation. This consideration is called the \textit{restriction of $V$ to $\mathfrak{h}$}, denoted by $\mathrm{Res}_{\mathfrak{h}}^{\mathfrak{g}} V$.
If $V$ is an irreducible representation of $\mathfrak{g}$, then $\mathrm{Res}_{\mathfrak{h}}^{\mathfrak{g}} V$ may be decomposed into the direct sum of irreducible representations of $\mathfrak{h}$. One can naturally ask what the multiplicity of each irreducible component is. An explicit description of these numbers is called a \textit{branching rule} or \textit{branching law}. If $V$ and $W$ are isomorphic as $\mathfrak{g}$-representations, we use the notation $V\cong_{\mathfrak{g}} W$.

    The study of branching rules is complicated in general. We may start with a simple case, $\mathfrak{g}=\mathfrak{sl}_n := \mathfrak{sl}_n(\mathbb{C})$, the set of all $n\times n$ traceless matrices, together with its irreducible representation $V$ of finite dimension and $\mathrm{Res}_{\mathfrak{s}}^{\mathfrak{sl}_n}V$, the restriction of $V$ to a subalgebra $\mathfrak{s}$ isomorphic to $\mathfrak{sl}_2$, which is known as an $\mathfrak{sl}_2$-subalgebra.
     
     Denote the irreducible $\mathfrak{sl}_2$-representation of highest weight $i$ by $F_i$. Once $V$ is restricted to $\mathfrak{s}$, we have a decomposition
    $$V \ \cong_{\mathfrak{s}} \ m_0 F_0\oplus m_1 F_1\oplus m_2 F_2\oplus \cdots.$$
    Each $m_j$ is called the {\it multiplicity} of $F_j$ in the $\mathfrak{s}$-representation $V$. We sometimes denote this multiplicity by $\mathrm{mult}(F_j: V)$. If there is no confusion, we simply write $m_j$. Since the $m_j$'s are multiplicities in the decomposition of the $\mathfrak{sl}_2$-representation, we have a well-known description of each $m_j$ as the following (see for example in \cite[Corollary 5.2.3]{He}):
    \begin{equation}\label{description of muktiplicities} m_j \ = \ \dim V_j -\dim V_{j+2},\end{equation}
    where each $V_j$ is the weight space of the weight $j$ in the $\mathfrak{s}$-representation $V$. Then, we also have that the number of irreducible summands of $V$ is equal to $$m_0+m_1+\cdots \ = \ \dim V_0 + \dim V_1.$$ Therefore, the number of irreducible summands of $V$ is equal to the sum of the multiplicities of the weight $0$ and the weight $1$. Since $V$ is finite dimensional, the \textit{highest component} $h :=\max \{j\mid m_j\neq 0\}$ exists. If $V$ is an irreducible representation of $\mathfrak{sl}_n$ with the highest weight $\lambda$, it is mentioned in \cite[Page 23]{Ka} that, as branching to a principal $\mathfrak{sl}_2$ subalgebra,
    
    \begin{align}\label{highest comp}
     h = \ \sum_{\text{positive roots }\alpha} \lambda(H_{\alpha}),    
    \end{align}
    where $H_{\alpha}$ is a semisimple element of an $\mathfrak{sl}_2$-triplet $\{X_{\alpha}, Y_{\alpha}, H_{\alpha} \}$ corresponding to a positive root $\alpha$. 
     Analogously, we can define the \textit{lowest component} $\ell :=\min \{j\mid m_j\neq 0\}$. Using these terminologies, we can then write 
    $$V \ \cong_{\mathfrak{s}} \ m_{\ell} F_{\ell}\oplus \cdots \oplus m_h F_h.$$
    One study of Branching algebra is about describing $\ell$ and its multiplicity. In \cite{LW}, an upper bound of $\ell$ is given, namely $\ell<n$. Unlike the highest component $h$, it is difficult to determine what $\ell$ exactly is. It is even harder to determine the multiplicity of $F_j$ for any $j$. This is a challenging problem in representation theory. Many scholars have been trying to find a simple and optimal way to compute those multiplicities. By providing a concrete and computationally efficient tool, this work enhances the applicability of branching rules in both theoretical and applied contexts.

    In this work, we propose an alternative way to compute the multiplicities $m_j$'s through the following main theorem where the proof is provided in Section \ref{section: recursion}.

\begin{theorem}\label{thm: general recursion formula}
    Let $\mathfrak{s}$ be an $\mathfrak{sl}_2$-subalgebra of $\mathfrak{sl}_n$. For each dominant integral weight $\lambda\in \Lambda^+$, let $m_j(\lambda)=\mathrm{mult}(F_{j}: \mathrm{Res}_{\mathfrak{s}}^{\mathfrak{sl}_n}L(\lambda))$. Then for $d\in \mathbb{Z}_{\geq 0}$ and $k=1, 2, \dots, n-1$,
    $$m_d(\lambda+\omega_k)=\sum_{(j,j')} m_j(\lambda)m_{j'}(\omega_k) \ -  \sum_{\mu\in P(\lambda, k)-\{\lambda+ \omega_k\}}m_d(\mu),$$
    where the first summation ranges over $(j,j')\in \mathbb{Z}_{\geq 0}\times \mathbb{Z}_{\geq 0}$ such that $|j-j'|\leq d\leq j+j'$ and $2|(j+j'-d)$ and whose summands are all zeros but finitely many. In particular,
    $$m_0(\lambda+\omega_k) \ = \ \sum_{j=0}^\infty m_j(\lambda)m_j(\omega_k) \ -  \sum_{\mu\in P(\lambda, k)-\{\lambda+ \omega_k\}}m_0(\mu).$$
    Here $P(\lambda, k)$ is the set of dominant integral weights obtained by adding $k$ boxes into the Young diagram of $\lambda$ such that no more than one is in the same row.
\end{theorem}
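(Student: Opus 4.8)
The plan is to compute the restriction to $\mathfrak{s}$ of the tensor product $L(\lambda)\otimes L(\omega_k)$ in two different ways and then compare the resulting $F_d$-multiplicities. The key structural fact is that restriction is a monoidal functor: for $X\in\mathfrak{s}\subseteq\mathfrak{sl}_n$, the element $X$ acts on $L(\lambda)\otimes L(\omega_k)$ by $X\otimes 1+1\otimes X$ regardless of whether we regard $X$ inside $\mathfrak{sl}_n$ or inside $\mathfrak{s}$, so that
$$\mathrm{Res}_{\mathfrak{s}}^{\mathfrak{sl}_n}\bigl(L(\lambda)\otimes L(\omega_k)\bigr) \cong_{\mathfrak{s}} \bigl(\mathrm{Res}_{\mathfrak{s}}^{\mathfrak{sl}_n} L(\lambda)\bigr)\otimes\bigl(\mathrm{Res}_{\mathfrak{s}}^{\mathfrak{sl}_n} L(\omega_k)\bigr).$$

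First I would expand the right-hand side using $\mathrm{Res}_{\mathfrak{s}}^{\mathfrak{sl}_n} L(\lambda)\cong_{\mathfrak{s}}\bigoplus_j m_j(\lambda)F_j$ and $\mathrm{Res}_{\mathfrak{s}}^{\mathfrak{sl}_n} L(\omega_k)\cong_{\mathfrak{s}}\bigoplus_{j'} m_{j'}(\omega_k)F_{j'}$, obtaining $\bigoplus_{j,j'} m_j(\lambda)m_{j'}(\omega_k)(F_j\otimes F_{j'})$. Applying the Clebsch--Gordan rule $F_j\otimes F_{j'}\cong_{\mathfrak{s}}\bigoplus F_d$, where $d$ runs over $|j-j'|\leq d\leq j+j'$ with $d\equiv j+j'\pmod 2$, shows that the multiplicity of $F_d$ on this side equals exactly $\sum_{(j,j')} m_j(\lambda)m_{j'}(\omega_k)$ subject to the stated index conditions; finiteness of the sum follows because each factor is finite dimensional, so only finitely many $m_j$ are nonzero.

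Next I would decompose the tensor product as an $\mathfrak{sl}_n$-representation \emph{before} restricting. Here Pieri's rule gives $L(\lambda)\otimes L(\omega_k)\cong_{\mathfrak{sl}_n}\bigoplus_{\mu\in P(\lambda,k)} L(\mu)$, since $L(\omega_k)=\Lambda^k(\mathbb{C}^n)$ is the fundamental representation whose Young diagram is a single column of $k$ boxes and tensoring by it adds a vertical $k$-strip. Restricting termwise to $\mathfrak{s}$, the multiplicity of $F_d$ on this side is $\sum_{\mu\in P(\lambda,k)} m_d(\mu)$. Equating the two expressions yields $\sum_{\mu\in P(\lambda,k)} m_d(\mu)=\sum_{(j,j')} m_j(\lambda)m_{j'}(\omega_k)$, and isolating the summand $\mu=\lambda+\omega_k$ produces the recursion. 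The specialization to $d=0$ follows because the constraints $|j-j'|\leq 0\leq j+j'$ force $j=j'$, after which $2\mid(j+j'-0)$ holds automatically.

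The only point requiring care---and the closest thing to an obstacle---is the combinatorial bookkeeping: I must verify that $\lambda+\omega_k$ genuinely lies in $P(\lambda,k)$ so that it can be separated from the rest of the sum. This holds because $\omega_k$ corresponds to the partition $(1^k)$, so $\lambda+\omega_k$ adds exactly one box to each of the first $k$ rows of the Young diagram of $\lambda$, giving a vertical $k$-strip with no two boxes in the same row. I would also confirm that the Clebsch--Gordan selection rule translates precisely into the inequality and divisibility constraints on $(j,j')$ in the statement, and that Pieri's rule is applied in the $\mathfrak{sl}_n$ (rather than $\mathfrak{gl}_n$) normalization so that the indexing set coincides with the $P(\lambda,k)$ defined in the theorem.
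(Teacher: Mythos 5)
Your proposal is correct and follows essentially the same route as the paper: compute the $\mathfrak{s}$-decomposition of $L(\lambda)\otimes L(\omega_k)$ once via Pieri's rule followed by restriction, and once via the monoidal property of restriction followed by Clebsch--Gordan, then equate $F_d$-multiplicities and isolate the $\mu=\lambda+\omega_k$ term (which the paper justifies by its Proposition \ref{prop: Pieri}). The only cosmetic difference is that the paper phrases the two computations as restrictions of the $\mathfrak{sl}_n\oplus\mathfrak{sl}_n$-module along the two chains $\mathfrak{s}\subseteq\mathfrak{sl}_n\subseteq\mathfrak{sl}_n\oplus\mathfrak{sl}_n$ and $\mathfrak{s}\subseteq\mathfrak{s}\oplus\mathfrak{s}\subseteq\mathfrak{sl}_n\oplus\mathfrak{sl}_n$, which is the same argument.
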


As branching rules for fundamental representations are initial conditions of this formula, we study them in section \ref{section: fund rep}. In section \ref{section: example coro}, we provide some examples and a consequence of our recursion formula.

\bigskip

\section{Recursion Formula for Multiplicity of $F_d$}\label{section: recursion}
    
    \bigskip
    
   We mainly follow the notations and conventions of representations of $\mathfrak{sl}_n$ from \cite{Fu, GW}, and we recall them here. Let $E_{ij}$ be the matrix with $1$ in the $i$ th row of the $j$ th column and $0$’s everywhere else. Let $H_i=E_{ii}$ and 
    $$\mathfrak{h}=\{a_1 H_1+ a_2 H_2+\cdots +a_n H_n \mid a_1, \dots, a_n \in \mathbb{C} \text{ and } a_1 + a_2 +\cdots +a_n=0\}.$$
    Then $\mathfrak{h}$ is the set of all traceless diagonal matrices which is a Cartan subalgebra of $\mathfrak{sl}_n$. 
    For $i = 1, \ldots, n$, let $\varepsilon_i : \mathfrak{h} \rightarrow \mathbb{C}$ be a linear functional defined by  
    $$
    \varepsilon_i(\sum_j a_jH_j) = a_i.
    $$
    Note that $\varepsilon_1 + \dots +\varepsilon_n = 0$.
     For $i = 1,\dots , n-1$, 
    let
    $$\alpha_i = \varepsilon_i - \varepsilon_{i+1},$$
    $$\omega_i = \varepsilon_1 + \varepsilon_2 + \cdots + \varepsilon_i.
    $$
    Then $\alpha_i$'s and $\omega_i$'s are simple roots and fundamental weights of $\mathfrak{sl}_n$, respectively. Let $\Lambda$ and $\Lambda^+$ be the weight lattice
    and the set of all dominant integral weights of $\mathfrak{sl}_n$ which are determined by
    $$
    \Lambda = \Span_\mathbb{Z}\{\omega_1, \omega_2,\dots, \omega_n\} \text{ and }
    \Lambda^+ = \Span_{\mathbb{Z}_{\geq 0}}\{\omega_1, \omega_2,\dots, \omega_n\},
    $$
    respectively. One can associate each $$\lambda=a_1\omega_1+ a_2\omega_2+ \cdots + a_{n-1}\omega_{n-1}\in \Lambda^+ (a_i\in \mathbb{Z}_{\geq 0})$$ with the partition $(\lambda_1 \geq  \lambda_2\geq \dots\geq \lambda_{n-1} \geq 0)$ where
    \begin{align*}
       \lambda_1 &= a_1+ a_2+ \cdots + a_{n-1}\\
       \lambda_2 &= a_2+ \cdots + a_{n-1}\\ 
       &\  \vdots\\
      \lambda_{n-1} &= a_{n-1}.
    \end{align*}
    In other words, $\lambda_i$ equals the coefficient of $\varepsilon_i$. Define a total order $\prec$ on $\Lambda^+$ by $\lambda \prec \mu$ if $(\lambda_1 \geq  \lambda_2\geq \dots\geq \lambda_{n-1} \geq 0)$ less than $(\mu_1 \geq  \mu_2\geq \dots\geq \mu_{n-1} \geq 0)$ with respect to the lexicographic order. For each $\lambda\in \Lambda^+$, let $L(\lambda)$ be the irreducible representation corresponding to $\lambda$. 
    
    We recall the Pieri's rule. Let $\lambda\in \Lambda^+$ and $k=1,2, \dots, n-1$. As representations of $\mathfrak{sl}_n$,
    $$L(\lambda)\otimes L(\omega_k) \ \cong \ \bigoplus_{\mu \in P(\lambda, k)} L(\mu)$$
    where $P(\lambda, k)$ is the set of dominant integral weights obtained by adding $k$ boxes into the Young diagram of $\lambda$ such that no more than one is in the same row. 

\bigskip

\begin{ex} 
  $\lambda=2\omega_2 +\omega_3$ in $\mathfrak{sl}_4$ has the Young diagram
    \medskip
    \begin{center}
       \ytableausetup{centertableaux}
        \begin{ytableau}
        \none & & & \\
        \none & & &\\
        \none &
    \end{ytableau}\ \ .
    \end{center}
    \medskip
    To determine $P(\lambda, 2)$, we add 2 boxes filled by $\times$ into the Young diagram of $\lambda$ in such a way that no more than one is in the same row and the results are in the Young diagram shape. Then $P(\lambda, 2)$ consists of the following weights:
    \medskip
    \begin{align*}
     \ytableausetup{centertableaux}
        \begin{ytableau}
            \none & & & & \times\\
            \none & & & & \times \\
            \none &
        \end{ytableau}
        &\,\, \ = \ 3\omega_2 + \omega_3,
        \\
        \\
        \begin{ytableau}
            \none & & & & \times\\
            \none & & & \\
            \none & & \times
        \end{ytableau}
        &\,\,\ = \ \omega_1+ \omega_2 + 2\omega_3,
        \\
        \\
        \begin{ytableau}
            \none & & & &\times \\
            \none & & & \\
            \none & \\
            \none & \times
        \end{ytableau}
        &\,\,=
        \begin{ytableau}
            \none & & & \\
            \none & & \\
        \end{ytableau}
        \,\, \ = \ \omega_1 + 2\omega_2,
        \\
        \\
        \begin{ytableau}
            \none & & & \\
            \none & & & \\
            \none & &\times\\
            \none & \times
        \end{ytableau}
         &\,\,=
         \begin{ytableau}
            \none & & \\
            \none & & \\
            \none & \\
        \end{ytableau}
        \,\,\ = \ \omega_2 + \omega_3.     
    \end{align*} 
    
    \medskip
    
    The last two Young diagrams are subject to the condition that $\varepsilon_1 + \varepsilon_2 + \varepsilon_3 + \varepsilon_4 = 0$.
\end{ex}    
 
 \medskip
 
\begin{prop}\label{prop: Pieri}
    $\lambda+ \omega_k\in P(\lambda, k)$ and $\lambda+ \omega_k\succeq \mu$ for any $\mu\in P(\lambda, k)$.
\end{prop}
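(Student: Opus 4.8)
The plan is to verify the two assertions separately, working throughout with the partition coordinates $(\mu_1 \geq \mu_2 \geq \cdots \geq \mu_{n-1} \geq 0)$ attached to a dominant weight $\mu$, since the order $\prec$ is by definition the lexicographic order on these tuples.

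For the first assertion, I would simply observe that $\lambda + \omega_k$ has partition coordinates $(\lambda_1 + 1, \dots, \lambda_k + 1, \lambda_{k+1}, \dots, \lambda_{n-1})$; this is obtained from the Young diagram of $\lambda$ by placing exactly one new box at the end of each of the first $k$ rows. No two of these boxes lie in the same row, and the result is again a genuine partition because $\lambda_k + 1 \geq \lambda_{k+1}$ follows from $\lambda_k \geq \lambda_{k+1}$. Moreover, since $k \leq n-1$, no box is added to row $n$, so no column of height $n$ is created and no reduction is required; hence $\lambda + \omega_k$ lies in $P(\lambda, k)$.

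For the second assertion, let $\mu \in P(\lambda, k)$ and let $\tilde\mu$ be the pre-reduction Young diagram obtained from $\lambda$ by adding the vertical $k$-strip, so that $\tilde\mu_i - \lambda_i \in \{0,1\}$ for all $i$ and $\sum_i (\tilde\mu_i - \lambda_i) = k$. Because $\lambda$ has at most $n-1$ rows and at most one box is added per row, $\tilde\mu$ has at most $n$ rows and $\tilde\mu_n \leq 1$, so at most one column of height $n$ can appear, and passing from $\tilde\mu$ to $\mu$ removes either no column or exactly one such column. I would then split into these two cases. If no column is removed, then $\mu = \tilde\mu$ and I compare $\mu$ with $\lambda + \omega_k$ at the first coordinate $i$ where they differ: when $i \leq k$ one has $(\lambda+\omega_k)_i = \lambda_i + 1$ while $\mu_i \in \{\lambda_i, \lambda_i+1\}$ must equal $\lambda_i$, giving $\mu \prec \lambda+\omega_k$; and the case $i > k$ cannot occur, since agreement on rows $1,\dots,k$ already accounts for all $k$ added boxes, so an extra box in row $i$ would force $\sum_i(\mu_i-\lambda_i) \geq k+1$. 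If one column is removed, then $\mu_1 = \tilde\mu_1 - 1 \leq \lambda_1 < \lambda_1 + 1 = (\lambda+\omega_k)_1$, so again $\mu \prec \lambda + \omega_k$. In all cases $\mu \preceq \lambda+\omega_k$, as required.

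I expect the only real subtlety, the main obstacle, to be the $\mathfrak{sl}_n$ column reduction, namely ensuring that a weight arising from deleting a height-$n$ column cannot leapfrog $\lambda+\omega_k$ in the lexicographic order. The bound $\tilde\mu_n \leq 1$ confines this to a single column, and the first-coordinate estimate $\mu_1 \leq \lambda_1$ dispatches it cleanly. Conceptually, the statement just records that $\lambda+\omega_k$ is the highest weight of $L(\lambda)\otimes L(\omega_k)$, which dominates every constituent, and since the dominance order refines $\prec$ this yields an alternative route; however, the direct combinatorial argument above is self-contained and matches the Young-diagram description of $P(\lambda,k)$ used elsewhere in the paper.
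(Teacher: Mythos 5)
Your proof is correct and follows essentially the same approach as the paper's: identify $\lambda+\omega_k$ as the result of adding one box to each of the first $k$ rows, and compare any other placement lexicographically. Your version is actually more careful than the paper's one-line argument, since you explicitly handle the $\mathfrak{sl}_n$ reduction of a height-$n$ column (showing at most one such column can arise and that its removal forces $\mu_1\leq\lambda_1<(\lambda+\omega_k)_1$), a case the published proof passes over silently.
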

\begin{proof}
    Note that $\lambda+ \omega_k$ is obtained by adding $k$ boxes into the first $k$ rows of the Young diagram of $\lambda$. Young diagram obtained by adding $k$ boxes in other ways yields partitions which are lower than $\lambda+ \omega_k$ in the lexicographic order.
\end{proof}


    Now, we are ready to prove the main theorem, which gives a method to recursively compute the multiplicity of $F_d$ in $\mathrm{Res}_{\mathfrak{s}}^{\mathfrak{sl}_n}L(\lambda)$ for any $\lambda\in \Lambda^+$.
    
    \bigskip
    
\medskip
\noindent\textbf{Proof of Theorem \ref{thm: general recursion formula}}:
    Consider $L(\lambda)\otimes L(\omega_k)$ as an irreducible representation of $\mathfrak{sl}_n\oplus \mathfrak{sl}_n$. Apply Pieri's rule when we restrict it to $\mathfrak{sl}_n$ so that
    $$\mathrm{Res}_{\mathfrak{sl}_n}^{\mathfrak{sl}_n\oplus \mathfrak{sl}_n} (L(\lambda)\otimes L(\omega_k)) \ \cong \ \bigoplus_{\mu \in P(\lambda, k)} L(\mu).$$
    Then we restrict it further to the principal subalgebra $\mathfrak{s}$:
    $$\mathrm{Res}_{\mathfrak{s}}^{\mathfrak{sl}_n\oplus \mathfrak{sl}_n}(L(\lambda)\otimes L(\omega_k)) \ \cong \ \bigoplus_{\mu \in P(\lambda, k)}\bigoplus_k m_k(\mu) F_k.$$
    The multiplicity of $F_d$ equals $\sum_{\mu\in P(\lambda, k)} m_d(\mu)$.
    
    On the other hands, restricting $L(\lambda)\otimes L(\omega_k)$ to $\mathfrak{s}\oplus \mathfrak{s}$, we have
    \begin{align*}
        \mathrm{Res}_{\mathfrak{s}\oplus \mathfrak{s}}^{\mathfrak{sl}_n\oplus \mathfrak{sl}_n}(L(\lambda)\otimes L(\omega_k))
        & \ \cong \ \mathrm{Res}_{\mathfrak{s}}^{\mathfrak{sl}_n}L(\lambda)\otimes \mathrm{Res}_{\mathfrak{s}}^{\mathfrak{sl}_n}L(\omega_k)\\
        & \ \cong \ \left(\bigoplus_j m_j(\lambda) F_j\right) \otimes \left(\bigoplus_{j'} m_{j'}(\omega_k) F_{j'}\right)\\
        & \ \cong \ \bigoplus_j \bigoplus_{j'} m_j(\lambda)m_{j'}(\omega_k) F_j\otimes F_{j'}.
    \end{align*}
    Then we restrict it further to $\mathfrak{s}$:
    $$\mathrm{Res}_{\mathfrak{s}}^{\mathfrak{sl}_n\oplus \mathfrak{sl}_n}L(\lambda)\otimes L(\omega_k) \ \cong \ \bigoplus_j \bigoplus_{j'} \bigoplus_{k=|j-j'|}^{j+j'} m_j(\lambda)m_{j'}(\omega_k) F_k.$$
    On the right hand side, we observe that $F_d$ occurs if and only if $|j-j'|\leq d\leq j+j'$ and $2|(j+j'-d)$. Then the multiplicity of $F_d$ equals $
    \sum_{(j,j')} m_j(\lambda)m_{j'}(\omega_k)$ where the summation ranges over $(j,j')\in \mathbb{Z}_{\geq 0}\times \mathbb{Z}_{\geq 0}$ such that $|j-j'|\leq d\leq j+j'$ and $2|(j+j'-d)$. The result follows from Proposition \ref{prop:  Pieri}.  \qed

\section{Branching rules of Fundamental $\mathfrak{sl}_n$ Representations}\label{section: fund rep}

\medskip

    In this section, we compute the multiplicity of $\mathrm{Res}_{\mathfrak{s}}^{\mathfrak{sl}_n} L(\omega_k)$ as they are the initial values for the recursion formula in Theorem \ref{thm: general recursion formula}. We divide this into two cases: principal and non-principal $\mathfrak{sl}_2$ subalgebras.

    Recall that any $\mathfrak{sl}_2$ subalgebra is generated by a triple $\{H,X,Y\}$ with $X$ nilpotent. By \cite[Theorem 3.2.10]{CM}, there exists a one-to-one correspondence between $\mathrm{SL}_n$-conjugacy classes of $\mathfrak{sl}_2$ subalgebras of $\mathfrak{sl}_n$ and nonzero nilpotent $\mathrm{SL}_n$-orbits in $\mathfrak{sl}_n$.
    Through Jordan normal forms, there is a one-to-one correspondence between nilpotent $\mathrm{SL}_n$-orbits in $\mathfrak{sl}_n$ and partitions of $n$. 

    Following the terminologies in \cite{CM}, a \textit{regular element} $x$ of $\mathfrak{g}$ is an element such that $\dim \mathfrak{g}^x = \mathrm{rank} \ \mathfrak{g}$ where $\mathfrak{g}^x$ is the centralizer of $x$ in $\mathfrak{g}$. An element $e\in\mathfrak{g}$ is {\it regular nilpotent} if $e$ is regular and $\mathrm{ad}_e$ is nilpotent. Any element of $\mathfrak{g}$ that is conjugate to $e$ is also regular nilpotent. In fact, all regular nilpotent elements are conjugate and form the principal (regular) orbit in $\mathfrak{g}$. A \textit{principal $\mathfrak{sl}_2$-subalgebra} is a subalgebra $\mathfrak{s}$ of $\mathfrak{g}$ such that $\mathfrak{s}\cong \mathfrak{sl}_2$ and $\mathfrak{s}$ contains a regular nilpotent element \cite{Ko1}. In the case $\mathfrak{g}=\mathfrak{sl}_n$, $\mathfrak{s}$ is principal if and only if it contains a nilpotent element $X$ whose Jordan block has only one block of size $n$.     
    
    If $X$ is a nilpotent element of $\mathfrak{sl}_n$ whose Jordan blocks have sizes $d_1\geq d_2\geq \cdots \geq d_m\geq 1$, we say that $X$ is nilpotent of type $[d_1,d_2,...,d_m]$. For each partition $[d_1,d_2,...,d_m]$ of $n$, let $\mathfrak{s}_{[d_1,d_2,...,d_m]}$ be the $\mathfrak{sl}_2$ subalgebra generated by 
    \[H_{[d_1,d_2,...,d_m]} \ = \ 
    \begin{pmatrix}
    H_{d_1} & & &   \\
    & H_{d_2} & &   \\
    & & \ddots &  \\
    & & &  H_{d_m}
    \end{pmatrix}
    \]\\
    \[X_{[d_1,d_2,...,d_m]} \ = \ 
    \begin{pmatrix}
    X_{d_1} & & &\\
    & X_{d_2} & &  \\
    & & \ddots & \\
    & & & X_{d_m}
    \end{pmatrix}
    \]\\
    and
     \[Y_{[d_1,d_2,...,d_m]} \ = \ 
    \begin{pmatrix}
    Y_{d_1} & & &\\
    & Y_{d_2} & & \\
    & & \ddots & \\
    & & & Y_{d_m}
    \end{pmatrix}
    \]
    where for $r\in \mathbb{N}$,
    \[H_{r} \ = \ 
    \begin{pmatrix}
    r-1 & & & & \\
    & r-3 & & & \\
    & & \ddots & &\\
    & & & -(r-3) &\\
    & & & & -(r-1)
    \end{pmatrix},
    \]
    $$
    X_r \ = \ \begin{pmatrix}
    0 & 1 &  &  &  & \\
     & 0 & 1 &  &  & \\
     &  & 0 & \ddots &  & \\
     &  &  & \ddots & 1 & \\
     &  &  &  & 0 & 1\\
     &  &  &  &  & 0\\
    \end{pmatrix}, \hspace{0.5 cm}
    Y_r \ = \ \begin{pmatrix}
    0 &  &  &  &  & \\
    \lambda_1 & 0 &  &  &  & \\
     & \lambda_2 & 0 &  &  & \\
     &  & \lambda_3 & \ddots &  & \\
     &  &  & \ddots & 0 & \\
     &  &  &  & \lambda_{n-1} & 0\\
    \end{pmatrix}
    $$
    
    \medskip
    \noindent and $\lambda_i=i(n-i)$ for $i=1,\dots, n-1$. Then $\mathfrak{s}_{[d_1,d_2,...,d_m]}$ corresponds to the nilpotent $\mathrm{SL}_n$-orbits of $X_{[d_1,d_2,...,d_m]}$ and the partition $[d_1,d_2,...,d_m]$ of $n$.

    For any irreducible $\mathfrak{sl}_n$-representation $V$, denote the multiset of weights in $V$ restricting to $\mathfrak{s}$ (resp. $\mathfrak{s'}$) by $\Lambda_{\mathfrak{s}}$ (resp. $\Lambda'_{\mathfrak{s}}$). If $\mathfrak{s}$ and $\mathfrak{s}'$ correspond to the same partition (equivalently, they are $\mathrm{SL}_n$-conjugate), then $\Lambda_{\mathfrak{s}}=\Lambda'_{\mathfrak{s}}$.
    Consequently, when we restrict an irreducible $\mathfrak{sl}_n$-representation to any $\mathfrak{sl}_2$-subalgebra, it suffices to restrict to $\mathfrak{s}_{[d_1,d_2,...,d_m]}$. Recall that $L(\omega_k)\cong_{\mathfrak{sl}_n} \wedge^k (\mathbb{C}^n)$ for $k=1, 2,\dots n-1$.

    
\subsection{Principal subalgebras}\label{subsection: principal}
    In this subsection,  we let $\mathfrak{s}=\mathfrak{s}_{[n]}$ be a principal $\mathfrak{sl}_2$-subalgebra generated by $H=H_n, X=X_n, Y=Y_n$. We firstly study the restriction of $L(\omega_1)\cong_{\mathfrak{sl}_n} \mathbb{C}^n$ to $\mathfrak{s}$. Then $H$ is regular, all eigenvalues of $\mathrm{ad}_H$ are even integers, and $H$ is diagonalizable with $n$ distinct eigenvalues, say $\lambda_1,\lambda_2,...,\lambda_n$. Indeed, all eigenvalues of $\mathrm{ad}_H$ are of the form $\lambda_i - \lambda_j$. Hence $\lambda_i$'s are all even or all odd. It is well-known that in decomposing the $\mathfrak{s}$-representation $\mathbb{C}^n$, the number of irreducible summands is the sum of dimensions of 0-weight space and 1-weight space, which is equal to the sum of multiplicities of eigenvalues 0 and 1 of $H$ acting on $\mathbb{C}^n$. Since $\lambda_i$ are distinct and all even or all odd, a number of irreducible summands is exactly one. Therefore, $\mathrm{Res}_{\mathfrak{s}}^{\mathfrak{sl}_n} L(\omega_1) \cong F_{n-1}$.
    
	Now, we consider $\mathrm{Res}_{\mathfrak{s}}^{\mathfrak{sl}_n} L(\omega_k), k=1,\dots, n-1$. For $d,n,k\in \mathbb{N}$, define
	\begin{align*}
		P_k (d)&:= \{(a_1,\dots,a_k)\mid a_i\in \nn, a_i>a_{i+1} \text{ and } a_1+\cdots+ a_k=d \},\\
		P_k^n(d)&:= \{(a_1,\dots,a_k)\mid a_i\in \{1,2,\dots,n\}, a_i>a_{i+1} \text{ and } a_1+\cdots+ a_k=d \},
	\end{align*}
	and define
	$$p_k (d):= \mid P_k (d)\mid, \hspace{0.5 cm} p_k^n(d):= \mid P_k^n(d)\mid.$$ 
	
	
	
	\medskip 
	
	\begin{prop}\label{prop: mult}
		For each $k=1,2,...,n-1$ and nonnegative integer $j$,
		$$\mathrm{mult}(F_j:\mathrm{Res}_{\mathfrak{s}}^{\mathfrak{sl}_n}L(\omega_k))=p_k^n\left(\frac{kn-j+k}{2}\right)-p_k^n\left(\frac{kn-j+k}{2}-1\right).$$
	\end{prop}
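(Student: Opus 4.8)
The plan is to reduce everything to weight-space dimensions through the identity \eqref{description of muktiplicities}, namely $\mathrm{mult}(F_j:V)=\dim V_j-\dim V_{j+2}$, and then to count those dimensions combinatorially. Since $L(\omega_k)\cong_{\mathfrak{sl}_n}\wedge^k(\mathbb{C}^n)$ and $\mathfrak{s}=\mathfrak{s}_{[n]}$ is generated by the diagonal element $H=H_n$, the first step is to read off the $H$-weights on the standard module $\mathbb{C}^n$: the coordinate vector $e_i$ is an eigenvector of $H_n$ with eigenvalue $n-2i+1$, for $i=1,\dots,n$.

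Next I would pass to the exterior power. As $H$ acts on $\wedge^k(\mathbb{C}^n)$ by derivation, each basis vector $e_{i_1}\wedge\cdots\wedge e_{i_k}$ with $1\le i_1<\cdots<i_k\le n$ is an $H$-eigenvector of weight $\sum_{\ell=1}^k(n-2i_\ell+1)=k(n+1)-2(i_1+\cdots+i_k)$. Hence the weight-$j$ space $V_j$ of $\wedge^k(\mathbb{C}^n)$ has a basis indexed by those $k$-element subsets $\{i_1,\dots,i_k\}$ of $\{1,\dots,n\}$ whose elements sum to $\tfrac{k(n+1)-j}{2}=\tfrac{kn-j+k}{2}$. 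Writing such a subset in strictly decreasing order identifies it with an element of $P_k^n\!\left(\tfrac{kn-j+k}{2}\right)$, so that $\dim V_j=p_k^n\!\left(\tfrac{kn-j+k}{2}\right)$.

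Finally I would substitute into \eqref{description of muktiplicities}: since $\dim V_{j+2}=p_k^n\!\left(\tfrac{kn-(j+2)+k}{2}\right)=p_k^n\!\left(\tfrac{kn-j+k}{2}-1\right)$, the claimed formula follows at once. The computation is essentially bookkeeping, and the only points that need care are conventions rather than genuine obstacles: one must match the increasing index convention for the wedge basis with the strictly decreasing convention in the definition of $P_k^n$, and one must handle parity—when $j$ and $k(n+1)$ have opposite parities the argument $\tfrac{kn-j+k}{2}$ is not an integer, in which case $p_k^n$ of it is understood to be $0$ and the formula correctly records that $F_j$ does not occur. As a sanity check, the case $k=1$ recovers $\mathrm{Res}_{\mathfrak{s}}^{\mathfrak{sl}_n}L(\omega_1)\cong F_{n-1}$ established earlier.
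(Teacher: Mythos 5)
Your proposal is correct and follows essentially the same route as the paper's proof: identify the $H_n$-weight of each basis vector $e_{i_1}\wedge\cdots\wedge e_{i_k}$ of $\wedge^k(\mathbb{C}^n)$, observe that $\dim V_j=p_k^n\bigl(\tfrac{kn-j+k}{2}\bigr)$, and apply $\mathrm{mult}(F_j:V)=\dim V_j-\dim V_{j+2}$. You merely spell out the eigenvalue computation and the parity convention that the paper leaves as ``easy to check.''
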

	\begin{proof}
        Consider $H_n$ acting on $\wedge^k (\mathbb{C}^n)$ with the natural basis $$\{e_{i_1}\wedge e_{i_2}\wedge \cdots \wedge e_{i_k}\mid 1\leq i_1 < i_2 <\cdots < i_k\leq n\}.$$ Let $V_j$ be the weight space of weight $j$ in $\wedge^k (\mathbb{C}^n)$ as a representation of $\mathfrak{s}$.
		It is easy to check that 
		$$\{e_{i_1}\wedge e_{i_2}\wedge \cdots \wedge e_{i_k} \mid 1\leq i_1 < i_2 <\cdots < i_k\leq n \text{ and } i_1 +\cdots +i_k=\frac{kn-j+k}{2}\}.$$
		is a basis of $V_j$. Then
		$$\mathrm{mult}(F_j:L(\omega_k))=\dim V_j - \dim V_{j+2}
		=p_k^n\left(\frac{kn-j+k}{2}\right)-p_k^n\left(\frac{kn-j+k}{2}-1\right)$$
		as desired.
	\end{proof}

    We already described above that $\mathrm{Res}_{\mathfrak{s}}^{\mathfrak{sl}_n} L(\omega_1) \cong F_d$ where $d=n-1$. It is known that $F_d \cong_{\mathfrak{s}} \mathrm{S}^d(\mathbb{C}^2)$. Then
    $$L(\omega_k)\cong_{\mathfrak{sl}_n} \wedge^k (\mathbb{C}^n) \cong_{\mathfrak{s}} \wedge^k (F_d) \cong_{\mathfrak{s}} \wedge^k(\mathrm{S}^d(\mathbb{C}^2)).$$
    By \cite{LW}, the $q$-binomial coefficient of $\wedge^k(\mathrm{S}^d(\mathbb{C}^2))$ and $\mathrm{S}^k(\mathrm{S}^{n-k}(\mathbb{C}^2))$ are equal:
    $$\binom{d+1}{k}_q=\binom{k+(n-k)}{k}_q.$$
    Hence
    \begin{equation}\label{Res SS}
        \mathrm{Res}_{\mathfrak{s}}^{\mathfrak{sl}_n} L(\omega_k) \cong_{\mathfrak{s}} \wedge^k(\mathrm{S}^d(\mathbb{C}^2)) \cong_{\mathfrak{s}} \mathrm{S}^k(\mathrm{S}^{n-k}(\mathbb{C}^2)).
    \end{equation}
    Therefore, the problem of principal $\mathfrak{sl}_2$-types in fundamental representations has a direct relation with plethysms $\mathrm{S}^k(\mathrm{S}^d(\mathbb{C}^2))$.
    
    One can apply \cite[I.8]{Ma} to describe multiplicities in $\mathrm{Res}_{\mathfrak{s}}^{\mathfrak{sl}_n} L(\omega_k)$ for $k=2, 3$ as follow. Recall that irreducible representation of $\mathrm{SL}_n$ can be parametrized by partitions $\lambda$ with length at most $n$. Their characters $s_\lambda$ are Schur functions on $n$ variables. In particular, as a representation of $\mathrm{SL}_2$,
    $$s_{(\lambda_1,\lambda_2)}(q,q^{-1})=\sum_{j=0}^{\lambda_1-\lambda_2}q^{2\lambda_2-\lambda_1 +2j}$$
    which is the same as the character of $F_{\lambda_1-\lambda_2}.$ Hence
    $$\mathrm{mult}(s_{(\lambda_1,\lambda_2)}: s_{(k)}\circ s_{(n-k)})=\mathrm{mult}(F_{\lambda_1-\lambda_2}: \mathrm{Res}_{\mathfrak{s}}^{\mathfrak{sl}_n}L(\omega_k)).$$
    
    For $k=2$, by \cite[I.8, 9(a)]{Ma},
    $$s_{(2)}\circ s_{(n-2)}=\sum_{i\,\text{even}}s_{(2n-i-4, i)}.$$
    Hence, for each integer $j\geq 0$,
    \[
        \mathrm{mult}(F_{j}: \mathrm{Res}_{\mathfrak{s}}^{\mathfrak{sl}_n}L(\omega_2))=\mathrm{mult}(s_{(\lambda_1,\lambda_2)}: s_{(2)}\circ s_{(n-2)})
        =
        \begin{cases}
        1     & \quad \text{if } \lambda_2 \text{ is even, }\\
        0       & \quad \text{otherwise,}
        \end{cases}
    \]
    where $\lambda_1=\frac{1}{2}(2n+j-4)$ and $\lambda_2=\frac{1}{2}(2n-j-4)$.
    
    For $k=3$, by \cite[I.8, 9(b)]{Ma},
    $$s_{(3)}\circ s_{(n-3)}=\sum_{\mu}\left(\left\lfloor\frac{1}{6}m(\mu)\right\rfloor+\varepsilon(\mu)\right)s_{(\mu)},$$
    where it is summed over partition $\mu=(\mu_1, \mu_2, \mu_3)$ of $3n-9$ with lengths at most 3,
    \[\varepsilon(\mu)=
    \begin{cases}
    1     & \quad \text{if } m(\mu)\equiv \mu_2 \equiv 0 \mod 2,\, \text{or } m(\mu) \equiv 3 \text{ or } 5 \mod 6,\\
    0       & \quad \text{otherwise,}\\
  \end{cases}
    \]
    and $m(\mu)=\min\{\mu_1-\mu_2,\mu_2-\mu_3\}$. Hence, for each integer $j\geq 0$,
    $$\mathrm{mult}(F_{j}: \mathrm{Res}_{\mathfrak{s}}^{\mathfrak{sl}_n}L(\omega_3))=\mathrm{mult}(s_{(\mu_1,\mu_2)}: s_{(3)}\circ s_{(n-3)})=\left\lfloor\frac{1}{6}m(\mu)\right\rfloor+\varepsilon(\mu),$$
    where $\mu_1=\frac{1}{2}(3n+j-9)$ and $\mu_2=\frac{1}{2}(3n-j-9)$.
    
    The plethysms $\mathrm{S}^k(\mathrm{S}^d(\mathbb{C}^r))$ has been extensively studied. For the explicit decomposition when $k=2, 3$ and $4$, we refer the reader to \cite{Ho2}.  The more recent paper \cite{KM} (and the references therein) also contains the results for $k$ up to $5$.
    
   The multiplicities of $F_j$ in $\mathrm{S}^m(\mathrm{S}^d(\mathbb{C}^2))$ can also be described by Cayley-Sylvester formula (see \cite[Theorem 3.3.4]{Sp} and \cite[3.3.1.4]{Ho}),
    \begin{align} \label{Cay}
    	\mathrm{mult}(F_{j}:\mathrm{S}^m(\mathrm{S}^d(\mathbb{C}^2))=\pi\left(d,m,\frac{1}{2}(md-j)\right)-\pi\left(d,m,\frac{1}{2}(md-j)-1\right)
    \end{align}
    where $\pi(n,k,d)$ is the number of partitions of the number $d$ into at most $k$ parts of size at most $n$. In particular, by equation (\ref{Res SS}),
    \begin{align} \label{Cay2}
	&\mathrm{mult}(F_{j}:\mathrm{Res}_{\mathfrak{s}}^{\mathfrak{sl}_n}L(\omega_k))\\
     &\hspace{1 cm}=\pi\left(n-k,k,\frac{1}{2}(k(n-k)-j)\right)-\pi\left(n-k,k,\frac{1}{2}(k(n-k)-j)-1\right) \nonumber
    \end{align}
    One has a recurrence relation
    $$\pi(n,k,d)=\pi(n-1,k,d)+\pi(n,k-1,d-n).$$
    There is also a generating function of $\pi(n,k,d)$ (cf. \cite{Ga, Sy}),
    $$\dfrac{G_n(t)G_k(t)}{G_{n+k}(t)}=\sum_{d=0}^\infty \pi(n,k,d) t^d,\hspace{0.25 cm} G_m(t)=\prod_{i=1}^m \dfrac{1}{1-t^i}.$$
    

\begin{rmk}
	It can be checked that there is a bijection from $P_k^n(d)$ to the set of partitions of the number $d-k(k+1)/2$ of length at most $k$ and size at most $n$ given by $a_i\mapsto b_i:=a_i-(k-i+1)$. Hence
	$$p_k^n(d)=\pi\left(n-k,k,d-k(k+1)/2\right).$$
	Then
	\begin{align*}
		p_k^n\left(\frac{kn-j+k}{2}\right)&= \pi\left(n-k,k,\frac{kn-j+k}{2}-\frac{k(k+1)}{2}\right)\\
		&=\pi\left(n-k,k,\frac{1}{2}(k(n-k)-j)\right).
	\end{align*}

\end{rmk}

\subsection{Non-principal subalgebras}\label{subsection: non-principal}
    
    
    
    Let $\mathfrak{s}$ be an $\mathfrak{sl}_2$ subalgebra of type $[d_1, d_2,\dots, d_m]$. For each $k=1, 2, \dots, n-1$, let $\Lambda_k^{[d_1, d_2,\dots, d_m]}$ be the multiset of weights in $\mathrm{Res}_\mathfrak{s}^{\mathfrak{sl}_n} L(\omega_k)$. If there is no ambiguity, we simply write $\Lambda_k$. Consider $H_{[d_1,d_2,\ldots,d_m]}$ acting on the natural basis $\{e_{i_1}\wedge e_{i_2}\wedge \cdots \wedge e_{i_k}\mid 1\leq i_1 < i_2 <\cdots < i_k\leq n\}$ of $L(\omega_k)\cong \wedge^k (\mathbb{C}^n)$. Label the diagonal entries of $H_{[d_1,d_2,\ldots,d_m]}$ as $h_{ii}$ for $i=1,\ldots,n$. Then 
    $$\Lambda_k=\{h_{i_1 i_1}+h_{i_2 i_2}+\cdots h_{i_k i_k}\mid 1\leq i_1 < i_2 < \cdots < i_k \leq n\}.$$
    In fact, $\mathrm{Res}_\mathfrak{s}^{\mathfrak{sl}_n} L(\omega_k)$ is completely described by $\Lambda_k$.  

\begin{rmk}
    Let $\mathfrak{s}$ be a principal $\mathfrak{sl}_2$ subalgebra. As representations of $\mathfrak{s}$, $\mathbb{C}^n \cong F_d \cong \mathrm{S}^d(\mathbb{C}^2)$ where $d+1=n$. Then
	$$L(\omega_k)\cong_{\mathfrak{sl}_n} \wedge^k (\mathbb{C}^n) \cong_{\mathfrak{s}} \wedge^k (F_d) \cong_{\mathfrak{s}} \wedge^k(\mathrm{S}^d(\mathbb{C}^2)).$$
	By \cite{LW}, the $q$-binomial coefficient of $\wedge^k(\mathrm{S}^d(\mathbb{C}^2))$ and $\mathrm{S}^k(\mathrm{S}^{n-k}(\mathbb{C}^2))$ are equal:
	$$\binom{d+1}{k}_q=\binom{k+(n-k)}{k}_q.$$
	Hence
	$$L(\omega_k) \cong_{\mathfrak{s}} \wedge^k(\mathrm{S}^d(\mathbb{C}^2)) \cong_{\mathfrak{s}} \mathrm{S}^k(\mathrm{S}^{n-k}(\mathbb{C}^2)).$$
	By the Hermite reciprocity law, $\mathrm{Res}_{\mathfrak{s}}^{\mathfrak{sl}_n} L(\omega_k) \cong \mathrm{Res}_{\mathfrak{s}}^{\mathfrak{sl}_n} L(\omega_{n-k})$. 
    Since
	$$\binom{n}{i}_q\neq \binom{n}{j}_q$$
	for any distinct $i,j\in \{1, 2, \dots,\left\lfloor \frac{n}{2} \right\rfloor\}$, we have $\mathrm{Res}_{\mathfrak{s}}^{\mathfrak{sl}_n} L(\omega_i) \ncong \mathrm{Res}_{\mathfrak{s}}^{\mathfrak{sl}_n} L(\omega_{j})$ for any distinct $i,j\in \{1, 2, \dots,\left\lfloor \frac{n}{2} \right\rfloor\}.$ This implies that we can consider $\mathrm{Res}_{\mathfrak{s}}^{\mathfrak{sl}_n} L(\omega_k)$ for only $k$ up to $\left\lfloor \frac{n-1}{2} \right\rfloor$.
\end{rmk}

\begin{rmk}
    In the view of representation theory of $\mathfrak{sl}_n$. a representation $V$ of $\mathfrak{sl}_n$ and its dual $V^*$ are not isomorphic in general. However, a representations of $\mathfrak{sl}_2$ are self dual, it immediately follows that 
    $$\mathrm{Res}_{\mathfrak{s}}^{\mathfrak{sl}_n} V\cong \mathrm{Res}_{\mathfrak{s}}^{\mathfrak{sl}_n} V^*$$
    for any $\mathfrak{sl}_2$ subalgebra $\mathfrak{s}$.
\end{rmk}

    According to our knowledge, it is very complicated to explicitly determine $\Lambda_k$ or multiplicities for arbitrary $k$ and partitions. We investigate $\mathrm{Res}_{\mathfrak{s}}^{\mathfrak{sl}_n} L(\omega_k)$ for small $k$ as follows. For $k=1$, $\Lambda_1$ precisely consists of the diagonal entries of  $H_{[d_1,d_2,\ldots,d_m]}$. It immediately follows that
    $$\mathrm{Res}_{\mathfrak{s}_{[d_1,d_2,\ldots,d_m]}}^{\mathfrak{sl}_n} L(\omega_1)\cong F_{d_1-1}\oplus F_{d_2-1}\oplus \cdots \oplus F_{d_m-1}.$$
    
    The following proposition describes the case $k=2$.
    
    \medskip
    
\begin{prop}\label{non principal, k=2}
    Let $\mathfrak{s}$ be an $\mathfrak{sl}_2$-subalgebra of $\mathfrak{sl}_n$ of type $[d_1,d_2,\dots,d_m]$. Then
    $$\mathrm{Res}_\mathfrak{s}^{\mathfrak{sl}_n} L(\omega_2) \ \cong \ \left(\bigoplus_{d_i\geq 3}\mathrm{Res}_{\mathfrak{s}_{[d_i]}}^{\mathfrak{sl}_{d_i}}L(\omega_2)\right)\oplus sF_0\oplus \left(\bigoplus_{i<j} F_{d_i-1}\otimes F_{d_j-1}\right)$$
    where $s$ is the number of $d_i$ such that $d_i=2$.
\end{prop}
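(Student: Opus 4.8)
The plan is to exploit the block structure of the nilpotent element to decompose the standard representation as an $\mathfrak{s}$-module, and then to use the functorial behaviour of the exterior square under direct sums. First I would record that, because $X_{[d_1,\dots,d_m]}$ is block diagonal with a single Jordan block $X_{d_i}$ of size $d_i$ in the $i$-th position, the corresponding coordinate subspace $W_i\subseteq \mathbb{C}^n$ is an irreducible $\mathfrak{s}$-submodule with $W_i\cong_{\mathfrak{s}} F_{d_i-1}$ (the $d_i$-dimensional irreducible $\mathfrak{sl}_2$-representation, equivalently $\mathrm{S}^{d_i-1}(\mathbb{C}^2)$). Moreover the triple $\{H,X,Y\}$ acts on $W_i$ through $\{H_{d_i},X_{d_i},Y_{d_i}\}$, i.e. the action on $W_i$ factors through the principal subalgebra $\mathfrak{s}_{[d_i]}$ of $\mathfrak{sl}(W_i)\cong\mathfrak{sl}_{d_i}$. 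Thus $\mathbb{C}^n\cong_{\mathfrak{s}}\bigoplus_{i=1}^m W_i$.

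Next I would apply the standard $\mathfrak{s}$-equivariant decomposition of the second exterior power of a direct sum,
$$\wedge^2\left(\bigoplus_{i=1}^m W_i\right) \ \cong_{\mathfrak{s}} \ \bigoplus_{i=1}^m \wedge^2 W_i \ \oplus \ \bigoplus_{i<j} (W_i\otimes W_j),$$
which is $\mathfrak{s}$-equivariant because $\mathfrak{s}$ acts on $\wedge^2(\mathbb{C}^n)$ by derivations and each summand on the right is stabilised by this action. Since $L(\omega_2)\cong_{\mathfrak{sl}_n}\wedge^2(\mathbb{C}^n)$, the left-hand side is exactly $\mathrm{Res}_{\mathfrak{s}}^{\mathfrak{sl}_n}L(\omega_2)$. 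The cross terms give $W_i\otimes W_j\cong_{\mathfrak{s}} F_{d_i-1}\otimes F_{d_j-1}$, which already matches the last summand of the claimed formula.

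It then remains to identify the diagonal terms $\wedge^2 W_i\cong_{\mathfrak{s}}\wedge^2 F_{d_i-1}$ case by case. When $d_i=1$ the space $W_i$ is one-dimensional and $\wedge^2 W_i=0$, so it contributes nothing. When $d_i=2$ we have $\wedge^2 W_i\cong_{\mathfrak{s}}\wedge^2(\mathbb{C}^2)\cong F_0$, the trivial module, which accounts for the $sF_0$ term. When $d_i\geq 3$, using that $W_i$ is the standard representation $\mathbb{C}^{d_i}$ of $\mathfrak{sl}_{d_i}$ restricted through its principal subalgebra, one has $\wedge^2 W_i\cong_{\mathfrak{s}_{[d_i]}}\mathrm{Res}_{\mathfrak{s}_{[d_i]}}^{\mathfrak{sl}_{d_i}}\wedge^2(\mathbb{C}^{d_i})\cong\mathrm{Res}_{\mathfrak{s}_{[d_i]}}^{\mathfrak{sl}_{d_i}}L(\omega_2)$. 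Summing these three cases over $i$ reproduces the first two summands, and assembling all contributions yields the stated isomorphism.

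The computational content is routine; the one place to be careful is the bookkeeping in the last step, namely verifying that the $d_i=1$ and $d_i=2$ blocks — for which $L(\omega_2)$ of $\mathfrak{sl}_{d_i}$ is not defined — are correctly excluded from the first sum, and that the $d_i=2$ blocks are precisely the ones producing the $s$ copies of $F_0$. Everything else follows formally from the decomposition of $\mathbb{C}^n$ and the additivity of $\wedge^2$ over direct summands, so I do not expect a genuine obstacle.
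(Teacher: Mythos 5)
Your proof is correct and follows essentially the same route as the paper: the paper partitions the weight multiset $\Lambda_2$ into sums $h_{ii}+h_{jj}$ with both indices in the same Jordan block versus in distinct blocks, which is exactly the weight-level shadow of your decomposition $\wedge^2\bigl(\bigoplus_i W_i\bigr)\cong\bigoplus_i\wedge^2 W_i\oplus\bigoplus_{i<j}W_i\otimes W_j$. Your module-theoretic phrasing also handles the $d_i=1$ and $d_i=2$ edge cases a bit more explicitly than the paper does, but the substance is the same.
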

\begin{proof}

    
    For $h_{ii},h_{jj}\in\{d_s-1,d_s-3,...,-(d_s-3),-(d_s-1)\}$ where $d_s\geq 3$, the weights $h_{ii}+h_{jj}$ contribute $\mathrm{Res}_{\mathfrak{s}_{[d_s]}}^{\mathfrak{sl}_{d_s}}L(\omega_2)$. If $d_s=2$, then $h_{ii}+h_{jj}=0$ and so it contributes $F_0$.
    
    For $h_{ii}\in\{d_s-1,d_s-3,...,-(d_s-3),-(d_s-1)\}$ and $h_{jj}\in\{d_t-1,d_t-3,...,-(d_t-3),-(d_t-1)\}$ where $s<t$, the weights $h_{ii}+h_{jj}$ contribute
    $$F_{d_s-d_t}\oplus F_{d_s-d_t+2}\oplus \cdots \oplus F_{d_s+d_t-2}$$
    which is isomorphic as $\mathfrak{s}$-representations to $F_{d_s-1}\otimes F_{d_t-1}$.
\end{proof}

    Note that $\mathrm{Res}_\mathfrak{s}^{\mathfrak{sl}_n} L(\omega_2)$ can be expressed in terms of branching rules to principal subalgebras which already determined in subsection \ref{subsection: principal}. 
    Unfortunately, we are not able to do so for $k\geq 3$ and arbitrary partitions. We believe that to solve this problem, other techniques should be involved. However, if $k$ and a partition $[d_1, d_2, \dots d_m]$ are given, one can explicitly determine $\Lambda_k^{[d_1, d_2, \dots d_m]}$ as shown in the following example.

\begin{ex}\label{example: [4,3]}
    Let $\mathfrak{s}$ be an $\mathfrak{sl}_2$-subalgebra of $\mathfrak{sl}_{7}$ of type $[4,3]$. The diagonal entries of $H_{[4,3]}$ are 3, 1, -1, -3, 2, 0, -2. Then the multiset $\Lambda_3$ consists of 3, 1, 6, 4, 2, -1, 4, 2, 0, 2, 0, -2, 5, 3, 1, -3, 2, 0, -2, 0, -2, -4, 3, 1, -1, 
    -2, -4, -6, 1, -1, -3, -1, -3, -5, 0.
    
\noindent Then we rearrange these weights as
\smallskip

\begin{center}
\begin{tabular}{cccccccccccccc}
6 & & 4 & & 2 & & 0 & & -2 & & -4 & & -6\\
&5 & & 3 & & 1 & &-1 & & -3 & & -5 & \\
&&4 & & 2 & & 0 & & -2 & & -4\\
&&&3 & & 1 & & -1 & & -3\\
&&&3 & & 1 & & -1 & & -3\\
&&&&2 & & 0 & & -2\\
&&&&2 & & 0 & & -2\\
&&&&&1 & & -1\\
&&&&&&0\\

\end{tabular}
\end{center}
\smallskip
where each row represents irreducible representation of $\mathfrak{sl}_2$ of highest weight equal to the left most number. Hence,

$$\mathrm{Res}_\mathfrak{s}^{\mathfrak{sl}_7} L(\omega_3)\cong F_0 \oplus F_1 \oplus 2F_2 \oplus 2F_3 \oplus F_4 \oplus F_5 \oplus F_6.$$
\end{ex}

    
    Previously, we fix a small $k$ and consider arbitrary partitions. On the other hand, we consider a fixed partition and arbitrary $k$. 
    In particular, we consider partitions of the type $[r,1^{n-r}]$ where $1^{n-r}$ means there are $n-r$ copies of $1$'s, and the type $[r, s]$. By convention, $L(\omega_0)$ means a one dimensional representation.

\medskip

\begin{prop}\label{prop: r11111}
	Let $k=1,2,...,\left\lfloor \frac{n}{2} \right\rfloor$. For any integer $r\geq 1$,
	\begin{align*}
		\mathrm{Res}_{\mathfrak{s}_{[r,1^{n-r}]}}^{\mathfrak{sl}_n}L(\omega_k) \ \cong \  \bigoplus_{j=k-\alpha}^{\beta} \binom{n-r}{j}\mathrm{Res}_{\mathfrak{s}_{[r]}}^{\mathfrak{sl}_{r}}L(\omega_{k-j})\oplus \binom{n-r}{k}F_0,
	\end{align*}
	where $\alpha=\min\{k,r\}$, $\beta=\min\{k-1,n-r\}$ and $\binom{n-r}{k}=0$ if $n-r<k$.
\end{prop}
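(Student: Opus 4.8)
The plan is to analyze the action of $H_{[r,1^{n-r}]}$ on the natural basis of $\wedge^k(\mathbb{C}^n)$ by splitting the index set $\{1,\dots,n\}$ into the block $B=\{1,\dots,r\}$ coming from the Jordan block of size $r$, and the block $C=\{r+1,\dots,n\}$ coming from the $n-r$ blocks of size $1$. The key observation is that every diagonal entry $h_{ii}$ for $i\in C$ equals $0$, since each size-$1$ Jordan block contributes the $1\times 1$ matrix $H_1=(0)$. Thus a basis vector $e_{i_1}\wedge\cdots\wedge e_{i_k}$ has weight determined solely by how many of its indices land in $B$ versus $C$: if exactly $k-j$ of the indices lie in $B$ and $j$ of them lie in $C$, then the weight is just the weight contributed by the $B$-part. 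This is the structural heart of the argument.

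First I would fix an integer $j$ with $0\le j\le k$ and count the ways to choose $j$ indices from $C$; there are $\binom{n-r}{j}$ such choices, each contributing weight $0$. I would then observe that the remaining $k-j$ indices, chosen from $B$, produce exactly the weight multiset of $H_{[r]}$ acting on $\wedge^{k-j}(\mathbb{C}^r)$, i.e.\ the weights of $\mathrm{Res}_{\mathfrak{s}_{[r]}}^{\mathfrak{sl}_r}L(\omega_{k-j})$. Since the $C$-indices contribute nothing to the weight, the full multiset $\Lambda_k^{[r,1^{n-r}]}$ decomposes as a disjoint union over $j$ of $\binom{n-r}{j}$ copies of the weight multiset of $\mathrm{Res}_{\mathfrak{s}_{[r]}}^{\mathfrak{sl}_r}L(\omega_{k-j})$. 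As emphasized in the paragraph preceding the proposition, the restriction is completely determined by this weight multiset, so the additivity of weight multisets under disjoint union translates directly into the claimed direct-sum decomposition of $\mathfrak{s}$-representations.

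The remaining task is to pin down the correct range of summation for $j$ and to isolate the $F_0$ term. The index $k-j$ must be a legitimate fundamental weight for $\mathfrak{sl}_r$, so we need $0\le k-j\le r$, which forces $j\ge k-r$ and, together with $j\le k$, gives the lower bound $k-\alpha$ with $\alpha=\min\{k,r\}$. We also cannot pick more than $n-r$ indices from $C$, giving $j\le n-r$. The extreme case $j=k$ (all indices from $C$, so $k-j=0$) corresponds to $L(\omega_0)$, a one-dimensional trivial $\mathfrak{sl}_r$-representation restricting to $F_0$; I would peel this off as the separate summand $\binom{n-r}{k}F_0$, with the convention that it vanishes when $n-r<k$. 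The surviving values $j=k-\alpha,\dots,\beta$ with $\beta=\min\{k-1,n-r\}$ then account for the genuinely nontrivial fundamental representations of $\mathfrak{sl}_r$, matching the stated bounds.

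I do not expect a serious obstacle in the main weight-counting step, since it is a clean bookkeeping argument built on the vanishing of the $C$-block diagonal entries. The only point requiring care is the boundary analysis: verifying that the indexing conventions $\alpha=\min\{k,r\}$ and $\beta=\min\{k-1,n-r\}$ exactly capture the admissible $j$, that the $j=k$ term is correctly extracted as $\binom{n-r}{k}F_0$, and that the assumption $k\le\lfloor n/2\rfloor$ guarantees $\wedge^k(\mathbb{C}^n)\cong L(\omega_k)$ (rather than a representation requiring the duality identification from the remarks). These edge cases are where I would double-check the combinatorics, but they are routine rather than deep.
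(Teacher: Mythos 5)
Your argument is correct and follows essentially the same route as the paper: both split the diagonal of $H_{[r,1^{n-r}]}$ into the $H_r$-block and the $n-r$ zeros, sort basis vectors of $\wedge^k(\mathbb{C}^n)$ by how many indices fall in each block, identify the nonzero-block contribution with the weight multiset of $\mathrm{Res}_{\mathfrak{s}_{[r]}}^{\mathfrak{sl}_r}L(\omega_{k-j})$, and peel off the all-zero case as $\binom{n-r}{k}F_0$, with the same boundary analysis yielding $\alpha$ and $\beta$. (Your parenthetical worry about $k\le\lfloor n/2\rfloor$ is unnecessary — $\wedge^k(\mathbb{C}^n)\cong L(\omega_k)$ for all $1\le k\le n-1$; the bound merely reflects the duality reduction noted earlier in the paper.)
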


\begin{proof}
	The diagonal entries of $H_{[r,1^{n-r}]}$ are $r-1, r-3, \dots, -(r-1), 0, 0, \dots, 0$ with $n-r$ zeroes. Let $\lambda=a_1+a_2+\cdots +a_k\in\Lambda_k^{[r,1^{n-r}]}$. If all of $a_i$'s belong to $\{0, 0, \dots, 0\}$, they are all zeroes, which contribute $\binom{n-r}{k}F_0$. On the other hand, suppose that at least one of $a_i$'s is chosen from $r-1, r-3, \dots, -(r-1)$. The largest amount of $a_i$'s in $\{r-1, r-3, \dots, -(r-1)\}$ is $\alpha$ and $(k-\alpha)$ $a_i$'s are collected from $\{0, 0, \dots, 0\}$. Also, the least amount of $a_i$ in $\{r-1, r-3, \dots, -(r-1)\}$ is one and the rest, $(k - 1)$ $a_i$'s, are collected from $\{0, 0, \dots, 0\}$. So the largest amount of $a_i$'s in $\{0, 0, \dots, 0\}$ is $\beta$. Thus we get $\displaystyle\bigoplus_{j=k-\alpha}^{\beta} \binom{n-r}{j}\mathrm{Res}_{\mathfrak{s}_{[r]}}^{\mathfrak{sl}_{r}}L(\omega_{k-j})$ from this case.
\end{proof}

\medskip

\begin{prop}\label{prop: rs}
    Let $k=1,2,...,\left\lfloor \frac{n}{2} \right\rfloor$. For any integers $r\geq s \geq 1$,
    \begin{align*}
		\mathrm{Res}_{\mathfrak{s}_{[r, s]}}^{\mathfrak{sl}_n}L(\omega_k)\cong \bigoplus_{j=0}^{\beta} \left(\mathrm{Res}_{\mathfrak{s}_{[r]}}^{\mathfrak{sl}_{r}}L(\omega_{k-j})\otimes \mathrm{Res}_{\mathfrak{s}_{[s]}}^{\mathfrak{sl}_{s}}L(\omega_{j})\right),
	\end{align*}
	where $\beta=\min\{k,s\}$ and we regard $\mathrm{Res}_{\mathfrak{s}_{[s]}}^{\mathfrak{sl}_{s}}L(\omega_{s})=F_0$.
\end{prop}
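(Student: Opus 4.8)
The plan is to mimic the structure of the proof of Proposition \ref{prop: r11111}, but now both ``blocks'' contribute nontrivial $\mathfrak{sl}_2$-types rather than just copies of $F_0$. Concretely, I would start from the basis-and-diagonal-weights description established just before Example \ref{example: [4,3]}: the representation $\mathrm{Res}_{\mathfrak{s}_{[r,s]}}^{\mathfrak{sl}_n} L(\omega_k) \cong \wedge^k(\mathbb{C}^n)$ has a natural basis $\{e_{i_1}\wedge\cdots\wedge e_{i_k}\}$, and each basis vector is a weight vector for $H_{[r,s]}$ whose weight is the sum of the corresponding diagonal entries $h_{ii}$. Since $H_{[r,s]}$ is block diagonal with an $r\times r$ block $H_r$ (diagonal entries $r-1,r-3,\dots,-(r-1)$) and an $s\times s$ block $H_s$ (entries $s-1,\dots,-(s-1)$), the first $r$ coordinates span a copy of $\mathbb{C}^r$ and the last $s$ coordinates span a copy of $\mathbb{C}^s$. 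This yields the tensor-factor decomposition $\wedge^k(\mathbb{C}^n)\cong\wedge^k(\mathbb{C}^r\oplus\mathbb{C}^s)\cong\bigoplus_{j} \wedge^{k-j}(\mathbb{C}^r)\otimes\wedge^{j}(\mathbb{C}^s)$.

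The key algebraic step is to recognize that this is an identity of $\mathfrak{s}_{[r,s]}$-representations, not merely of vector spaces. I would argue that $\mathfrak{s}_{[r,s]}=\{H_{[r,s]},X_{[r,s]},Y_{[r,s]}\}$ acts diagonally across the two blocks, so $\mathbb{C}^r$ and $\mathbb{C}^s$ are each $\mathfrak{s}_{[r,s]}$-stable subspaces on which the action is, respectively, the principal $\mathfrak{s}_{[r]}$-action on $\mathbb{C}^r$ and the principal $\mathfrak{s}_{[s]}$-action on $\mathbb{C}^s$ (both realized inside the single copy of $\mathfrak{sl}_2$ via the embeddings $X_r, Y_r, H_r$ and $X_s, Y_s, H_s$). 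Because exterior powers and tensor products are functorial for the $\mathfrak{s}_{[r,s]}$-action, the decomposition
$$\wedge^k(\mathbb{C}^r\oplus\mathbb{C}^s)\ \cong\ \bigoplus_{j} \wedge^{k-j}(\mathbb{C}^r)\otimes\wedge^{j}(\mathbb{C}^s)$$
holds as $\mathfrak{s}_{[r,s]}$-representations. Translating $\wedge^{k-j}(\mathbb{C}^r)\cong_{\mathfrak{s}_{[r]}}\mathrm{Res}_{\mathfrak{s}_{[r]}}^{\mathfrak{sl}_r}L(\omega_{k-j})$ and $\wedge^{j}(\mathbb{C}^s)\cong_{\mathfrak{s}_{[s]}}\mathrm{Res}_{\mathfrak{s}_{[s]}}^{\mathfrak{sl}_s}L(\omega_{j})$ gives the summand $\mathrm{Res}_{\mathfrak{s}_{[r]}}^{\mathfrak{sl}_r}L(\omega_{k-j})\otimes\mathrm{Res}_{\mathfrak{s}_{[s]}}^{\mathfrak{sl}_s}L(\omega_{j})$ for each $j$.

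It then remains to settle the range of $j$ and the boundary conventions. Since $\wedge^{j}(\mathbb{C}^s)=0$ whenever $j>s$, and similarly $\wedge^{k-j}(\mathbb{C}^r)=0$ whenever $k-j>r$, the surviving indices are exactly $\max\{0,k-r\}\le j\le\min\{k,s\}$; because $r\ge s$ and $k\le\lfloor n/2\rfloor$, one checks that $k-r\le 0$, so the lower limit is $j=0$ and the upper limit is $\beta=\min\{k,s\}$, matching the statement. The only genuinely delicate point is the extreme case $j=s$, where $\wedge^{s}(\mathbb{C}^s)$ is one-dimensional: as an $\mathfrak{s}_{[s]}$-representation it is the top exterior power, on which $H_s$ acts by the trace $\sum_i h_{ii}=0$, hence it is $F_0$, which is precisely the stated convention $\mathrm{Res}_{\mathfrak{s}_{[s]}}^{\mathfrak{sl}_s}L(\omega_s)=F_0$. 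I expect the main obstacle to be purely bookkeeping rather than conceptual: carefully verifying that the two diagonal blocks really carry the principal $\mathfrak{sl}_2$-actions of the appropriate smaller rank (so that the functoriality argument applies with the correct identifications), and confirming that the index range reduces cleanly under the hypotheses $r\ge s\ge 1$ and $k\le\lfloor n/2\rfloor$.
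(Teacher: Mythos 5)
Your proof is correct and follows essentially the same route as the paper's: the paper partitions the weight multiset $\Lambda_k^{[r,s]}$ into sub-multisets $B_j$ indexed by the number $j$ of diagonal entries drawn from the $s$-block, which is exactly the shadow of your decomposition $\wedge^k(\mathbb{C}^r\oplus\mathbb{C}^s)\cong\bigoplus_j\wedge^{k-j}(\mathbb{C}^r)\otimes\wedge^j(\mathbb{C}^s)$. The only difference is cosmetic --- you upgrade the paper's weight-multiset bookkeeping (which already suffices for $\mathfrak{sl}_2$-representations) to an actual module isomorphism, and your handling of the index range and the $j=s$ boundary case is if anything more explicit than the paper's.
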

\begin{proof}
    We prove by considering $\Lambda_k^{[r,s]}$. Let $A=\{r-1, r-3, \dots, -(r-1)\}$ and $B=\{s-1, s-3, \dots, -(s-1)\}$. Each element of $\Lambda_k^{[r,s]}$ is coming from choosing $k-j$ elements from $A$ and $j$ elements from $B$ and sum them. In other words, $\Lambda_k^{[r,s]}$ is a disjoint union of the multiset $B_j$ for $j=0, 1, \dots, \beta$ where
    $$B_j=\{a_1+ \cdots + a_{k-j}+ b_1+\cdots +b_j\mid a_i\in A, \text{$a_i$'s distinct}, b_i\in B, \text{$b_i$'s distinct}\},$$
    and $a_0$ (resp. $b_0$) means no elements from $A$ (resp. $B$). Note that $B_j$ precisely consists of weights coming from $\mathrm{Res}_{\mathfrak{s}_{[r]}}^{\mathfrak{sl}_{r}}L(\omega_{k-j})\otimes \mathrm{Res}_{\mathfrak{s}_{[s]}}^{\mathfrak{sl}_{s}}L(\omega_{j})$. This completes the proof.
\end{proof}

\begin{ex}
    We compute $\mathrm{Res}_{\mathfrak{s}_{[4, 3]}}^{\mathfrak{sl}_7}L(\omega_3)$ by using Proposition \ref{prop: rs} and compare to the Example \ref{example: [4,3]} that the result are the same.  
    \begin{align*}
        \mathrm{Res}_{\mathfrak{s}_{[4, 3]}}^{\mathfrak{sl}_7}L(\omega_3)
        &\cong\left( \mathrm{Res}_{\mathfrak{s}_{[4]}}^{\mathfrak{sl}_4}L(\omega_3)\otimes \mathrm{Res}_{\mathfrak{s}_{[3]}}^{\mathfrak{sl}_3}L(\omega_0)\right)\oplus \left( \mathrm{Res}_{\mathfrak{s}_{[4]}}^{\mathfrak{sl}_4}L(\omega_2)\otimes \mathrm{Res}_{\mathfrak{s}_{[3]}}^{\mathfrak{sl}_3}L(\omega_1)\right)\\ &\hspace{0.5 cm}\oplus \left( \mathrm{Res}_{\mathfrak{s}_{[4]}}^{\mathfrak{sl}_4}L(\omega_1)\otimes \mathrm{Res}_{\mathfrak{s}_{[3]}}^{\mathfrak{sl}_3}L(\omega_2)\right) \oplus \left( \mathrm{Res}_{\mathfrak{s}_{[4]}}^{\mathfrak{sl}_4}L(\omega_0)\otimes \mathrm{Res}_{\mathfrak{s}_{[3]}}^{\mathfrak{sl}_3}L(\omega_3)\right)\\
        &\cong(F_3 \otimes F_0) \oplus ((F_0\oplus F_4)\otimes F_2)\oplus (F_3\otimes F_2) \oplus (F_0 \otimes F_0)\\
        &\cong F_3\oplus (2F_2\oplus F_4 \oplus F_6)\oplus (F_1\oplus F_3 \oplus F_5)\oplus F_0\\
        &\cong F_0 \oplus F_1 \oplus 2F_2 \oplus 2F_3 \oplus F_4 \oplus F_5 \oplus F_6.
    \end{align*}
\end{ex}

\section{Examples}\label{section: example coro}
The following examples show how to apply Theorem \ref{thm: general recursion formula} repeatedly to determine multiplicities.

\begin{ex}\label{example: principal}
    Let $\mathfrak{s}$ be a principal $\mathfrak{sl}_2$ subalgebra of $\mathfrak{sl}_5$.  For each $\lambda\in \Lambda^+$, let $m_j(\lambda)=\mathrm{mult}(F_{j}: \mathrm{Res}_{\mathfrak{s}}^{\mathfrak{sl}_n}L(\lambda))$ and $\omega_i$ defined as in Section \ref{section: recursion}. We want to compute $m_0(2\omega_1+\omega_3)$. Observe that $P(2\omega_1,3)=\{2\omega_1+\omega_3, \omega_1+\omega_4\}$ as shown in the figure below: 
    \begin{center}
\ytableausetup{centertableaux}
\begin{ytableau}
\none & & & \times\\
\none & \times\\
\none & \times
\end{ytableau}
\hspace{50pt}
\begin{ytableau}
\none & & \\
\none & \times\\
\none & \times\\
\none & \times
\end{ytableau}
    \end{center}
    By Theorem \ref{thm: general recursion formula}, we get
    $$m_0(2\omega_1+\omega_3)=\sum_{j=0}^\infty m_j(2\omega_1) m_j(\omega_3)-m_0(\omega_1+\omega_4).$$
    So, we need to compute $m_0(\omega_1+\omega_4)$, $m_d(2\omega_1)$, and $m_d(\omega_3)$ for $d\in \mathbb{Z}_{\geq 0}$. Note that $P(\omega_1,4)=\{\omega_1+\omega_4, 0\}$ as the following figure. 
    \begin{center}
\ytableausetup{centertableaux}
\begin{ytableau}
\none & & \times\\
\none & \times\\
\none & \times\\
\none & \times
\end{ytableau}
\hspace{50pt}
\begin{ytableau}
\none & \\
\none & \times\\
\none & \times\\
\none & \times\\
\none & \times
\end{ytableau}
    \end{center}  
    Again, by Theorem \ref{thm: general recursion formula},
    $$m_0(\omega_1+\omega_4)=\sum_{j=0}^\infty m_j(\omega_1) m_j(\omega_4)-m_0(0)=m_4(\omega_1) m_4(\omega_4)-m_0(0)=1-1=0$$
    where the second equation follows from $\mathrm{Res}_\mathfrak{s}^{\mathfrak{sl}_5} L(\omega_1)\cong \mathrm{Res}_\mathfrak{s}^{\mathfrak{sl}_5} L(\omega_4)\cong F_4$.
    For $d\in\mathbb{Z}_{\geq 0}$,
    \begin{align*}
        m_d(2\omega_1)&=\sum_{(j,j')} m_j(\omega_1)m_{j'}(\omega_1)-\sum_{\mu\in P(\omega_1, 1)-\{2\omega_1\}}m_d(\mu)\\
        &=\sum_{(j,j')} m_j(\omega_1)m_{j'}(\omega_1)-m_d(\omega_2).
    \end{align*}    
    Observe that the summand $m_j(\omega_1)m_{j'}(\omega_1)=1$ if and only if $j=j'=4$. Since $\Lambda_2^{[5]}=\{6, 4, 2, 0, 2, 0, -2, -2, -4, -6\}$,  $\mathrm{Res}_\mathfrak{s}^{\mathfrak{sl}_5} L(\omega_2)\cong F_2\oplus F_6$. Then
    \[m_d(2\omega_1)=
        \begin{cases}
            1   & \quad \text{if } d=0, 4\text{ or } 8,\\
            0   & \quad \text{otherwise.}
        \end{cases}
    \]
    Since $\Lambda_3^{[5]}=\{6, 4, 2, 0, 2, 0, -2, -2, -4, -6\}$,  $\mathrm{Res}_\mathfrak{s}^{\mathfrak{sl}_5} L(\omega_3)\cong F_2\oplus F_6$. Hence
    $$m_0(2\omega_1+\omega_3)=\sum_{j=0}^\infty m_j(2\omega_1) m_j(\omega_3)-m_0(\omega_1+\omega_4)=0.$$
\end{ex}

\begin{ex}
    Retain all notations in Example \ref{example: principal} except that we consider the multiplicity $m_1(2\omega_1 + \omega_3)$ in the branching rule of $L(2\omega_1 + \omega_3)$ to an $\mathfrak{sl}_2$ subalgebra $\mathfrak{s}$ of type $[3,2]$. We still need to compute $m_1(\omega_1+\omega_4)$, $m_d(2\omega_1)$, and $m_d(\omega_3)$ for $d\in \mathbb{Z}_{\geq 0}$. 
    It is obvious that
    $$\mathrm{Res}_\mathfrak{s}^{\mathfrak{sl}_5} L(\omega_1)\cong \mathrm{Res}_\mathfrak{s}^{\mathfrak{sl}_5} L(\omega_4)\cong F_1 \oplus F_2.$$
    Since $\Lambda_1^{[3,2]}=\Lambda_4^{[3,2]}=\{2, 0, 3, 1, -2, 1, -1, -1, -3, 0\}$,
    $$\mathrm{Res}_\mathfrak{s}^{\mathfrak{sl}_5} L(\omega_2)\cong \mathrm{Res}_\mathfrak{s}^{\mathfrak{sl}_5} L(\omega_3)\cong F_0\oplus F_1 \oplus F_2 \oplus F_3.$$
    By Theorem \ref{thm: general recursion formula}, we get
    \begin{align*}
        m_1(\omega_1+\omega_4)&=\sum_{j, j'} m_j(\omega_1) m_j(\omega_4)-m_1(0)\\
        &=m_1(\omega_1)m_2(\omega_4)+m_2(\omega_1)m_1(\omega_4)-m_1(0) \\
        &=1+1-0 n\\
        &=2.
    \end{align*}
    Next, we consider 
    $$m_d(2\omega_1)=\sum_{(j,j')} m_j(\omega_1)m_{j'}(\omega_1)-m_d(\omega_2)$$ 
    for each $d$. Then
    \begin{align*}
        m_0(2\omega_1)&=m_1(\omega_1)m_1(\omega_1)+m_2(\omega_1)m_2(\omega_1)-m_0(\omega_2)=2-1=1\\
        m_1(2\omega_1)&=m_1(\omega_1)m_2(\omega_1)+m_2(\omega_1)m_1(\omega_1)-m_1(\omega_2)=2-1=1\\
        m_2(2\omega_1)&=m_1(\omega_1)m_1(\omega_1)+m_2(\omega_1)m_2(\omega_1)-m_2(\omega_2)=2-1=1\\
        m_3(2\omega_1)&=m_1(\omega_1)m_2(\omega_1)+m_2(\omega_1)m_1(\omega_1)-m_3(\omega_2)=2-1=1\\
        m_4(2\omega_1)&=m_2(\omega_1)m_2(\omega_1)-m_4(\omega_2)=1-0=1\\
        m_d(2\omega_1)&=0 \text{ for } d\geq 5.
    \end{align*}
    Hence
    \begin{align*}
        m_1(2\omega_1+\omega_3)&=m_0(2\omega_1)m_1(\omega_3)+m_1(2\omega_1)m_0(\omega_3)+m_1(2\omega_1)m_2(\omega_3)\\
        & \hspace{0.3 cm} +m_2(2\omega_1)m_1(\omega_3)+m_2(2\omega_1)m_3(\omega_3)+m_3(2\omega_1)m_2(\omega_3)\\
        & \hspace{0.3 cm} +m_4(2\omega_1)m_3(\omega_3)-m_1(\omega_1+\omega_4)\\
        &=7-2=5.
    \end{align*}
\end{ex}

\medskip

\section*{Acknowledgements}
    The first author would like to thank Department of Mathematics, National University of Singapore for an excellent working environment, where a part of this work was carried out. The first author is also supported by the MOE grant A-0004586-00-00. 
    The third author acknowledges the support 
      by Office of the Permanent Secretary, Ministry of Higher Education, Science, Research and Innovation  (OPS MHESI), Thailand Science Research and Innovation (TSRI) (Grant No. RGNS 64-096).

\end{document}